\documentclass[10pt,b5paper]{article}

\usepackage[T1]{fontenc}
\usepackage[utf8]{inputenc}
\usepackage[english]{babel}
\usepackage{amsmath,amssymb,amsthm}
\usepackage{lmodern}
\usepackage{geometry}
\usepackage{url}
\usepackage{titlesec}
\titleformat{\paragraph}[runin]{\normalfont\itshape}{}{0pt}{}[.]
\titlespacing*{\paragraph}{0pt}{0.7em}{0.5em}

\newtheorem{theorem}{Theorem}[section]

\newtheorem{proposition}[theorem]{Proposition}
\newtheorem{corollary}[theorem]{Corollary}
\theoremstyle{definition}

\theoremstyle{remark}
\newtheorem{remark}[theorem]{Remark}
\numberwithin{equation}{section}

\DeclareMathOperator{\E}{E}
\DeclareMathOperator{\Var}{Var}
\DeclareMathOperator{\Bin}{Bin}
\newcommand{\R}{\mathbb{R}}

\title{Absolute deviations of the binomial\\ about a prescribed centre:\\
the tail expansion in closed form}
\author{
N.~Elezovi\'c\\
Department of Applied Mathematics,\\
Faculty of Electrical Engineering and Computing,\\
University of Zagreb, 10000 Zagreb, Croatia\\
\texttt{neven.elezovic@fer.hr}
}
\date{}

\begin{document}
\maketitle

\begin{abstract}
	Let $X\sim\Bin(N,p)$ and let $c=Nr$ be a prescribed centre with $r\ne p$.  The finite formula
	for $\E|X-c|$ is classical, and the existence of a complete large-deviation expansion in powers
	of $N^{-1}$ follows from Timashev's saddle-point expansion for binomial tails.  We give the
	coefficients in closed form for the stop-loss tail produced by the absolute deviation, with the
	lattice defect of $Nr$ retained explicitly.

	The construction combines three standard ingredients: Bernoulli polynomials for the logarithm of
	successive binomial masses, the exponential Bell recursion, and Eulerian polynomials for weighted
	geometric sums.  The $k$-th coefficient has denominator exactly $(1-\rho)^{2k+2}$, where $\rho$
	is the exponential tilt; hence the effective expansion parameter is $1/(N(1-\rho)^2)$, the
	lattice weighted analogue of the parameter in the incomplete gamma expansion.  The case
	$r=\tfrac12$ gives the fold at the origin of a biased $\pm1$ random walk, and its parity split is
	identified with the lattice defect of the fold.
\end{abstract}

\medskip\noindent\textbf{Keywords:} binomial distribution; absolute deviation;
stop-loss transform; large deviations; exponential tilt; Bernoulli polynomials;
Eulerian polynomials; lattice oscillation; biased random walk.

\medskip\noindent\textbf{MSC 2020:} 60F10; 62E15; 41A60; 33C05; 60G50.

% =====================================================================
\section{Introduction}
\label{sec:intro}

	Let $X\sim\Bin(N,p)$, $q=1-p$, and consider the folded binomial $|X-c|$. Three
	centres occur naturally, and they are three different problems.

	\begin{itemize}
		\item \textbf{The mean, $c=Np$.} This is the mean absolute deviation, one of
		the oldest results in probability.
		\item \textbf{A prescribed centre, $c=Nr$ with $r\ne p$.} The fold now sits
		in the tail of the law, at a distance of order $N$ from the bulk.
		\item \textbf{The origin, $c=N/2$.} With $S_N=2X-N$ the position of the
		$\pm1$ walk that steps $+1$ with probability $p$, one has $|S_N|=2|X-N/2|$,
		so this is the expected absolute displacement of a biased walk. It is the
		second case with $r=\tfrac12$: the origin is the hypothesis that the coin is
		fair.
	\end{itemize}

	\paragraph{What is classical}
	The finite theory and the qualitative asymptotics are classical.

	The mean-centred case was put to De~Moivre by Alexander Cuming in 1721 and
	answered, for a fair coin, in the \emph{Miscellanea Analytica} of 1730; the
	biased version is Problem~73 of the third edition of the \emph{Doctrine of
	Chances}. With $\nu$ the integer satisfying $Np<\nu\le Np+1$,
	\begin{equation}\label{eq:demoivre-mean}
		\E|X-Np|=2\nu q\,b(\nu;N,p),\qquad b(k;N,p)=\binom Nk p^kq^{N-k}.
	\end{equation}
	Proofs were supplied by Todhunter~\cite{todhunter1865} and, without the
	assumption that $Np$ be an integer, by Poincar\'e~\cite{poincare1896}; it is
	Problem~35 of Chapter~IX of Feller~\cite[p.~241]{feller1968}, and the history is
	told by Diaconis and Zabell~\cite{diaconis_zabell}.

	For an \emph{arbitrary} centre the answer is equally classical. Todhunter's
	identity, in the form recorded as Lemma~1 of \cite[p.~288]{diaconis_zabell},
	gives $\E|X-c|$ in one distribution function and one point mass
	(Theorem~\ref{thm:todhunter} below), and Katti~\cite{katti1960} obtained the
	absolute moments of the binomial about an arbitrary constant by an incomplete
	generating function; the result is recorded in Johnson, Kotz and
	Kemp~\cite[\S3.3]{jkk}. The variable $|2X-N|$ is itself a named distribution,
	the \emph{folded binomial}~\cite{gart1970}. We do not claim novelty for the
	finite identities used below.

	The asymptotics for fixed $r\ne p$ is a large-deviation problem, and it too has
	been treated. The leading behaviour is Bahadur--Rao~\cite{bahadur_rao}; the
	non-lattice-point correction $\rho^{\{Nr\}}$ that appears below is the lattice
	adjustment of the saddle-point approximation to $\E[(X-K)^+]$ given by Huang and
	Oosterlee~\cite{huang_oosterlee}; and the \emph{existence} of a complete
	expansion in powers of $N^{-1}$ is due to Timashev~\cite{timashev1999}, who
	obtained it for binomial and Poisson tail probabilities by the saddle-point
	method, with coefficients defined by a recurrence rather than in closed form.
	Finally, the transition to the mean-centred regime, as $r\to p$, is Cram\'er's
	moderate-deviation theorem (Petrov~\cite[Ch.~VIII]{petrov1975}), and a
	description that is \emph{uniform} across the whole transition is available
	through Temme's uniform expansion of the incomplete beta
	function~\cite[\S8.18]{dlmf},~\cite{temme2015}.

	\paragraph{What is done here}
	The remaining point is the explicit form of the coefficients.

	\begin{itemize}
		\item \textbf{The coefficients in closed form}
		(Theorem~\ref{thm:all-coeff}). The expansion is generated by a three-step
		recursion, each step classical in isolation: Bernoulli polynomials expand the
		logarithm of the ratio of successive binomial masses; an exponential (Bell)
		recursion exponentiates it; Eulerian polynomials sum the resulting weighted
		geometric series. The weight is the linear factor produced by the absolute
		value --- so this is a \emph{stop-loss} tail, not a tail probability --- and
		the lattice defect $\theta_N=\{Nr\}$ of the centre is carried explicitly.
		We have not found the coefficients written out anywhere.
		\item \textbf{The exact denominators} (Corollary~\ref{cor:parameter}). The
		$k$-th coefficient has denominator exactly $(1-\rho)^{2k+2}$, with
		$(1-\rho)^{2k+2}S_k\to(-1)^k(2k+1)!!/(r\bar r)^k$. Hence the expansion
		proceeds in $1/(N(1-\rho)^2)$ and not in $1/N$. This is the lattice, weighted
		counterpart of a familiar fact: in the expansion of the incomplete gamma
		function at $z=\lambda a$ the successive terms carry $(z-a)^{-(2k+1)}$, i.e.\
		the parameter $1/(a(1-\lambda)^2)$ \cite[\S8.11(iii)]{dlmf}. The one extra
		power is what the linear weight contributes.
		\item \textbf{The walk} (Corollary~\ref{cor:walk-asymp},
		Remark~\ref{rem:walk-reading}). Taking $r=\tfrac12$ gives the asymptotics of
		the absolute displacement of a biased walk, and shows that the even/odd
		dichotomy of the exact formula is the lattice effect: $\theta_v=\{v/2\}$ takes
		exactly the two values $0$ and $\tfrac12$, so the parity of $v$ records the
		lattice defect of the fold.
		\item \textbf{The two orientations of the hypergeometric form}
		(Remark~\ref{rem:two-routes}). One might hope to expand the exact walk
		formula by multiplying the Stirling expansion of a binomial coefficient by a
		terminating ${}_2F_1$. This works only when the argument of the ${}_2F_1$ is a
		contraction; in that orientation it is equivalent to the tilt expansion. In
		the opposite orientation the hypergeometric function is exponentially large,
		the binomial coefficient exponentially small, their rates cancel exactly, and
		the desired quantity is not captured by termwise multiplication of the two
		asymptotic expansions.
	\end{itemize}

	Throughout, $b$ and $B$ denote the binomial mass and distribution function,
	\[
		b(y;k,\vartheta)=\binom ky \vartheta^y(1-\vartheta)^{k-y},\qquad
		B(\ell;k,\vartheta)=\Pr\{\Bin(k,\vartheta)\le\ell\},
	\]
	with $B(\ell;k,\vartheta)=0$ for $\ell<0$ and $=1$ for $\ell\ge k$.

% =====================================================================
\section{The classical finite formulas}
\label{sec:engine}

	The truncated first moment of a binomial is a binomial distribution function,
	\begin{equation}\label{eq:partial-first-moment}
		\sum_{y\le m}y\,b(y;k,\vartheta)=k\vartheta\,B(m-1;k-1,\vartheta),
	\end{equation}
	immediate from $y\binom ky=k\binom{k-1}{y-1}$. Combined with $|x|=x+2x^-$ this
	gives the absolute deviation about an arbitrary centre. In actuarial language
	the object is the \emph{stop-loss transform} $\E[(c-Y)^+]$; the underlying
	identity is Todhunter's.

\begin{theorem}[Todhunter's form; \cite{todhunter1865},
	{\cite[Lemma~1]{diaconis_zabell}}, \cite{katti1960}]\label{thm:todhunter}
	Let $Y\sim\Bin(k,\vartheta)$ with $0<\vartheta<1$, let $c\in\R$, and put
	$m:=\lfloor c\rfloor$. Then
	\begin{equation}\label{eq:todhunter}
		\E|Y-c| = (k\vartheta-c)\bigl[1-2B(m;k,\vartheta)\bigr]
		+ 2(m+1)(1-\vartheta)\,b(m+1;k,\vartheta).
	\end{equation}
\end{theorem}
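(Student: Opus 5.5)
Write $|x| = x + 2x^-$, where $x^- = \max(-x,0)$. Since $Y$ takes integer values and $m=\lfloor c\rfloor$, the event $\{Y<c\}$ coincides with $\{Y\le m\}$. Hence
\[
\E|Y-c| = (k\vartheta-c) + 2\sum_{y\le m}(c-y)\,b(y;k,\vartheta)
= (k\vartheta-c) + 2c\,B(m;k,\vartheta) - 2\sum_{y\le m}y\,b(y;k,\vartheta).
\]
By the truncated first-moment identity \eqref{eq:partial-first-moment}, the last sum equals $k\vartheta\,B(m-1;k-1,\vartheta)$. So everything reduces to a single relation between $k\vartheta B(m-1;k-1,\vartheta)$ and $k\vartheta B(m;k,\vartheta)$.

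The key step is the identity
\[
k\vartheta\,B(m-1;k-1,\vartheta) = k\vartheta\,B(m;k,\vartheta) - (m+1)(1-\vartheta)\,b(m+1;k,\vartheta).
\]
I would prove it from the standard recursion $B(m;k,\vartheta) = B(m;k-1,\vartheta) - \vartheta\,b(m;k-1,\vartheta)$. This follows by conditioning on the last trial: $B(m;k)=\vartheta B(m-1;k-1)+(1-\vartheta)B(m;k-1)$, together with $B(m;k-1)=B(m-1;k-1)+b(m;k-1)$. The recursion gives $B(m-1;k-1,\vartheta) = B(m;k,\vartheta) - (1-\vartheta)\,b(m;k-1,\vartheta)$.

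It then remains to check that
\[
k\vartheta(1-\vartheta)\,b(m;k-1,\vartheta) = (m+1)(1-\vartheta)\,b(m+1;k,\vartheta).
\]
This is the elementary identity $k\binom{k-1}{m} = (m+1)\binom{k}{m+1}$, with the powers of $\vartheta$ and $1-\vartheta$ matching: $\vartheta^{m+1}(1-\vartheta)^{k-m}$ on both sides.

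Substituting back gives
\[
\E|Y-c| = (k\vartheta-c) + 2(c-k\vartheta)B(m;k,\vartheta) + 2(m+1)(1-\vartheta)\,b(m+1;k,\vartheta),
\]
which is \eqref{eq:todhunter}.

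The only real care point is the boundary cases, which I would check against the stated conventions.
\begin{itemize}
\item If $m<0$, then $B(m;k,\vartheta)=0$ and $b(m+1;k,\vartheta)$ is either $0$ or, when $m=-1$, multiplied by $m+1=0$. The formula then gives $k\vartheta-c=\E(Y-c)$, which is correct because $Y\ge 0>c$.
\item If $m\ge k$, then $B(m;k,\vartheta)=1$ and $b(m+1;k,\vartheta)=0$. The formula gives $c-k\vartheta$, which is correct because $Y\le k\le c$.
\end{itemize}
In the interior range, the identities above hold with the convention that binomial coefficients vanish outside $0\le y\le k$. The case $m=k-1$ needs $b(k;k,\vartheta)=\vartheta^k$, which the identity handles directly. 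I expect this bookkeeping to be the main place where errors could creep in; the algebra itself is routine.
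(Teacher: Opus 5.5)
Your proof is correct and is essentially the paper's: both reduce via $|x|=x+2x^-$ to the partial sum over $y\le m$ and close with $k\vartheta\,b(m;k-1,\vartheta)=(m+1)\,b(m+1;k,\vartheta)$, the only difference being that you use \eqref{eq:partial-first-moment} together with the cumulative recursion $B(m;k,\vartheta)=B(m-1;k-1,\vartheta)+(1-\vartheta)\,b(m;k-1,\vartheta)$, which is just the summed form of the paper's telescoping identity \eqref{eq:sbp}. One small misstatement: when $c$ is an integer, $\{Y<c\}=\{Y\le m-1\}$ rather than $\{Y\le m\}$, but your sum is unaffected because the extra term $y=m=c$ carries the factor $c-y=0$.
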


\begin{proof}
	Writing $|Y-c|=(Y-c)+2(c-Y)^+$ and taking expectations,
	$\E|Y-c|=(k\vartheta-c)+2\,\E[(c-Y)^+]$. Only $y<c$ contributes to the partial
	expectation, and if $c$ is an integer the endpoint $y=c$ contributes zero, so
	the sum may be written through $m=\lfloor c\rfloor$ in all cases; the
	conventions on $B$ cover $m<0$ and $m\ge k$. Split the summand as
	$c-y=(k\vartheta-y)+(c-k\vartheta)$. The second part contributes
	$(c-k\vartheta)B(m;k,\vartheta)$. For the first, use
	\begin{equation}\label{eq:sbp}
		(k\vartheta-y)\,b(y;k,\vartheta)
		=k\vartheta(1-\vartheta)\bigl[b(y;k-1,\vartheta)-b(y-1;k-1,\vartheta)\bigr],
	\end{equation}
	which follows from $y\,b(y;k,\vartheta)=k\vartheta\,b(y-1;k-1,\vartheta)$
	together with the one-step recursion
	$b(y;k,\vartheta)=\vartheta\,b(y-1;k-1,\vartheta)
	+(1-\vartheta)\,b(y;k-1,\vartheta)$. (The factor $1-\vartheta$ is essential and
	is easily dropped by mistake.) Summing \eqref{eq:sbp} over $y\le m$ telescopes
	to $k\vartheta(1-\vartheta)\,b(m;k-1,\vartheta)$, which equals
	$(m+1)(1-\vartheta)\,b(m+1;k,\vartheta)$ by \eqref{eq:partial-first-moment} read
	at a single point. Collecting the two occurrences of $(k\vartheta-c)$ gives
	\eqref{eq:todhunter}.
\end{proof}

\begin{corollary}[The mean-centred one-term form]
	\label{cor:collapse}
	In \eqref{eq:todhunter} the distribution function disappears precisely when
	$c=k\vartheta$: the term carrying it is
	\[
		(k\vartheta-c)\bigl[1-2B(m;k,\vartheta)\bigr],
	\]
	and $B(m;k,\vartheta)=\tfrac12$ does not hold in general. At $c=k\vartheta$, with $\nu$ the integer satisfying
	$k\vartheta<\nu\le k\vartheta+1$,
	\begin{equation}\label{eq:collapse}
		\E|Y-k\vartheta| = 2\nu(1-\vartheta)\,b(\nu;k,\vartheta)
		= 2k\vartheta(1-\vartheta)\,b(\nu-1;k-1,\vartheta),
	\end{equation}
	which is \eqref{eq:demoivre-mean}.
\end{corollary}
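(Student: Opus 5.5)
The corollary follows from Theorem~\ref{thm:todhunter} by specialising the centre, so I will start from \eqref{eq:todhunter} and treat its two terms separately. The first claim is about the term $(k\vartheta-c)[1-2B(m;k,\vartheta)]$. This term vanishes for every law only through the prefactor $k\vartheta-c$, because $B(m;k,\vartheta)=\tfrac12$ is not an identity in $k,\vartheta,m$. To justify that, a single example suffices, say $k=1$, $\vartheta=\tfrac13$, $m=0$, where $B=\tfrac23$. For $c\ne k\vartheta$ the prefactor is nonzero, so the distribution function genuinely remains. This settles the ``precisely when'' assertion in the sense intended: the $B$-term drops out identically exactly at $c=k\vartheta$.

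Next I set $c=k\vartheta$. Then $m=\lfloor k\vartheta\rfloor$, and \eqref{eq:todhunter} reduces to $2(m+1)(1-\vartheta)\,b(m+1;k,\vartheta)$. It remains to match $m+1$ with $\nu$.

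\begin{itemize}
\item If $k\vartheta\notin\mathbb Z$, then $m<k\vartheta<m+1\le k\vartheta+1$, so $\nu=m+1$.
\item If $k\vartheta\in\mathbb Z$, then $m=k\vartheta$ and $m+1=k\vartheta+1$. This satisfies $k\vartheta<\nu\le k\vartheta+1$, so again $\nu=m+1$.
\end{itemize}

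This is the one point needing care: the half-open interval in the definition of $\nu$ is chosen exactly so that $\nu=\lfloor k\vartheta\rfloor+1$ in both cases. The computation gives the first equality in \eqref{eq:collapse}.

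For the second equality I will use $\nu\,b(\nu;k,\vartheta)=k\vartheta\,b(\nu-1;k-1,\vartheta)$, which is the identity $y\binom ky=k\binom{k-1}{y-1}$ behind \eqref{eq:partial-first-moment}, read at $y=\nu$. Multiplying by $2(1-\vartheta)$ gives $2k\vartheta(1-\vartheta)\,b(\nu-1;k-1,\vartheta)$. Finally, taking $k=N$ and $\vartheta=p$ recovers \eqref{eq:demoivre-mean}.

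No real obstacle is expected. The only delicate step is the integer case of $k\vartheta$ when identifying $\nu$.
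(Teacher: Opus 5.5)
Your proposal is correct and is essentially the paper's argument: the paper gives no separate proof, treating the corollary as the specialisation of Theorem~\ref{thm:todhunter} at $c=k\vartheta$, with $\lfloor k\vartheta\rfloor+1=\nu$ in both the integer and non-integer cases, and with the second form in \eqref{eq:collapse} coming from $y\binom ky=k\binom{k-1}{y-1}$, the same single-point reading of \eqref{eq:partial-first-moment} used at the end of the proof of Theorem~\ref{thm:todhunter}. Your example $B(0;1,\tfrac13)=\tfrac23$ makes concrete the paper's remark that $B(m;k,\vartheta)=\tfrac12$ is not an identity, which is all the ``precisely when'' clause asserts.
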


	This is a statement about the decomposition \eqref{eq:todhunter}. It does not
	assert that no single-term representation of some other kind can exist at
	another centre, and we make no such claim; Diaconis and Zabell remark
	\cite[p.~284]{diaconis_zabell} that the truncated binomial sum admits ``no
	essential simplifications'', citing Zeilberger~\cite{zeilberger1990}.

	\paragraph{The mean-centred asymptotics} These are not needed for what follows,
	but they are needed for the applications, and we record them for reference.
	Write $h_N:=\lceil Np\rceil-Np\in[0,1)$ for the lattice defect of the mean,
	$s^2:=4pq$, $\nu:=\lceil Np\rceil$ and $B_2(t)=t^2-t+\tfrac16$. The companion
	note~\cite[Theorem~4.2]{elezovic_mad} gives a complete expansion in integer
	powers of $N^{-1}$ whose coefficients oscillate through $h_N$; its first orders
	are
	\begin{equation}\label{eq:mean-asymp}
		2\,\E|X-Np|=\sqrt{\frac{2N}{\pi}}\,s
		\Bigl[\,1+\frac{\tilde c_1(N)}N+O(N^{-2})\Bigr],
		\qquad
		\tilde c_1(N)=\frac1{12}-\frac{B_2(h_N)}{2pq}.
	\end{equation}
	Since $\E(2|X-Np|)^2=4Npq$ exactly, the variance follows at once from
	\eqref{eq:collapse}:
	\begin{equation}\label{eq:mean-var}
	\begin{aligned}
		\Var\bigl(2|X-Np|\bigr)
		&=4Npq-16\,\nu^2q^2\,b(\nu;N,p)^2\\
		&=4pq\Bigl(1-\frac2\pi\Bigr)N-\frac{4pq}{3\pi}
		+\frac8\pi B_2(h_N)+O(N^{-1}).
	\end{aligned}
	\end{equation}
	At $p=\tfrac12$ the oscillation becomes the parity of $N$, and the constant
	order of \eqref{eq:mean-var} is $(-1)^N/\pi$.

% =====================================================================
\section{The fold in the tail: the expansion in closed form}
\label{sec:hyp}

	Let the centre now be a prescribed $Nr$ with $r\ne p$, and put
	\[
		A:=2\,|X-Nr| .
	\]
	Two things change. First, by Corollary~\ref{cor:collapse} there is no collapse.
	Second, the fold has left the bulk of the law, so the correction to the
	deterministic value $2N|p-r|$ is exponentially small, and its size is governed
	by a large-deviation rate. Write $\bar r:=1-r$ and
	\[
		\theta_N:=\{Nr\}\in[0,1),\qquad
		\rho:=\frac{r\,q}{\bar r\,p},\qquad
		D(r\Vert p):=r\log\frac rp+\bar r\log\frac{\bar r}q>0 .
	\]
	Note that $\rho<1$ exactly when $r<p$. The relabelling $X\mapsto N-X$,
	$p\mapsto q$, $r\mapsto\bar r$ leaves $A$ invariant, so \emph{every asymptotic
	statement below is written after that relabelling, and $r<p$, $\rho<1$
	throughout.}

% ---------------------------------------------------------------------
\subsection{Exact reduction}

\begin{proposition}[Exact form, and the first coefficients]\label{prop:hyp-exp}
	Let $0<p<1$, $0<r<1$, $X\sim\Bin(N,p)$ and $m:=\lfloor Nr\rfloor$.

	\emph{(i)} For \emph{every} $r$, Theorem~\ref{thm:todhunter} at $c=Nr$, $k=N$,
	$\vartheta=p$ gives
	\begin{equation}\label{eq:hyp-exact}
		\E A=2N(p-r)\bigl[1-2B(m;N,p)\bigr]+4(m+1)\,q\,b(m+1;N,p),
	\end{equation}
	equivalently $\E A=2N(p-r)+4\,\E[(Nr-X)^+]$.

	\emph{(ii)} Write the tail through its largest term:
	\begin{equation}\label{eq:T-def}
		\E\bigl[(Nr-X)^+\bigr]=b(m;N,p)\,T_N,
		\quad
		T_N:=\sum_{j=0}^{m}(\theta_N+j)\,R_j,
		\quad
		R_j:=\frac{b(m-j;N,p)}{b(m;N,p)} .
	\end{equation}
	Then, for fixed $r<p$,
	\begin{equation}\label{eq:hyp-tail}
		T_N=S_0(\theta_N)-\frac{U_1(\theta_N)}{N\,r\bar r}+O(N^{-2}),
	\end{equation}
	where, with $M_i:=\sum_{j\ge0}j^{i}\rho^{\,j}$ and $c:=r-\tfrac12$,
	\begin{align}
		S_0(\theta)&=\theta M_0+M_1,\label{eq:S0}\\
		U_1(\theta)&=\theta(\theta+c)\,M_1
		 +\Bigl(\tfrac32\theta+c\Bigr)M_2+\tfrac12M_3,\label{eq:U1}
	\end{align}
	and $M_0=(1-\rho)^{-1}$, $M_1=\rho(1-\rho)^{-2}$,
	$M_2=\rho(1+\rho)(1-\rho)^{-3}$, $M_3=\rho(1+4\rho+\rho^2)(1-\rho)^{-4}$.

	\emph{(iii)} The local mass factors \emph{exactly} as
	\begin{equation}\label{eq:hyp-tilt}
		b(m;N,p)=e^{-N D(r\Vert p)}\,\rho^{\,\theta_N}\,\pi_N(-\theta_N;\,r),
		\qquad
		\pi_N(h;r):=\binom{N}{Nr+h}r^{\,Nr+h}\,\bar r^{\,N\bar r-h},
	\end{equation}
	so that, by the local expansion \cite[Theorem~3.1]{elezovic_mad},
	\begin{equation}\label{eq:hyp-asymp}
		\E A=2N(p-r)
		+\frac{4\,e^{-N D(r\Vert p)}\,\rho^{\,\theta_N}}{\sqrt{2\pi N\,r\bar r}}
		\,S_0(\theta_N)\,\bigl(1+O(N^{-1})\bigr).
	\end{equation}
	The correction to the deterministic term is therefore of order
	$N^{-1/2}e^{-ND(r\Vert p)}$.

	The remainders in \emph{(ii)} and \emph{(iii)} are uniform in
	$\theta_N\in[0,1)$ and uniform for $(p,r)$ in compact subsets of
	$\{0<r<p<1\}$; they are \emph{not} uniform as $r\to p$
	(Corollary~\ref{cor:parameter}).
\end{proposition}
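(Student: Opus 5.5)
Part (i) is immediate: I apply Theorem~\ref{thm:todhunter} with $k=N$, $\vartheta=p$, $c=Nr$, so that $k\vartheta-c=N(p-r)$, and multiply by $2$. The second form comes from the first line of that proof, $\E|Y-c|=(k\vartheta-c)+2\E[(c-Y)^+]$. For (ii), the definition of $T_N$ is a rewriting of $\E[(Nr-X)^+]=\sum_{y\le m}(Nr-y)\,b(y;N,p)$. With $y=m-j$ one has $Nr-y=\theta_N+j$, and the factor $b(m;N,p)$ is pulled out.

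For the expansion \eqref{eq:hyp-tail} I will expand $R_j$. Each successive ratio is
\[
\frac{b(m-i-1;N,p)}{b(m-i;N,p)}=\frac{q}{p}\cdot\frac{m-i}{N-m+i+1}.
\]
Writing $m=Nr-\theta_N$, this equals
\[
\rho\cdot\frac{1-(\theta+i)/(Nr)}{1+(1-\theta+i)/(N\bar r)}.
\]
Taking logarithms and summing over $i=0,\dots,j-1$ gives
\[
\log R_j=j\log\rho-\frac1N\Bigl[\frac{\sum_{i<j}(\theta+i)}{r}+\frac{\sum_{i<j}(1-\theta+i)}{\bar r}\Bigr]+O\!\Bigl(\frac{j^3}{N^2}\Bigr).
\]
The bracket is a quadratic polynomial in $j$, essentially
\[
\frac{1}{r\bar r}\Bigl[\tfrac12 j^2+(\theta+c)j\Bigr]
\]
after combining $1/r+1/\bar r=1/(r\bar r)$ and using the identities for $\theta/r+(1-\theta)/\bar r$. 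I will do this bookkeeping carefully, since it is where $c=r-\tfrac12$ enters. Exponentiating gives
\[
R_j=\rho^j\Bigl(1-\frac{P_2(j)}{N r\bar r}+O\bigl((j^4+1)/N^2\bigr)\Bigr).
\]
Multiplying by $(\theta+j)$ and summing termwise produces
\[
S_0=\theta M_0+M_1,\qquad U_1=\sum_j(\theta+j)P_2(j)\rho^j .
\]
Expanding $(\theta+j)\bigl(\tfrac12 j^2+(\theta+c)j\bigr)$ yields exactly the coefficients $\theta(\theta+c)$, $\tfrac32\theta+c$ and $\tfrac12$ of \eqref{eq:U1}. The closed forms of $M_i$ are the Eulerian-polynomial sums $\sum j^i\rho^j$.

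The main obstacle is justifying termwise summation uniformly, since the expansion of $\log R_j$ is only valid for $j=o(N)$. I will split at $j\le N^{1/3}$, or at $j\le\varepsilon N$. On the small range the Taylor remainders are $O(j^{4}/N^2)$ and are summable against $\rho^j$. On the large range I use monotonicity. Each ratio is at most $\rho\,(1-(\theta+i)/(Nr))\le\rho$, so $R_j\le\rho^j$ for all $j$, and the tail $\sum_{j>J}(\theta+j)\rho^j$ is exponentially small. The missing parts of the infinite sums $M_i$ are controlled the same way. All constants depend only on $\rho$ and on $r,\bar r$ bounded away from $0$, which gives uniformity in $\theta_N$ and on compacts of $\{r<p\}$. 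Every bound degenerates as $\rho\to1$, which is consistent with the stated non-uniformity.

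For (iii), I write $b(m;N,p)=\binom{N}{m}p^mq^{N-m}$ with $m=Nr-\theta_N$. I factor $p^mq^{N-m}=r^m\bar r^{N-m}(p/r)^m(q/\bar r)^{N-m}$ and evaluate the last factor as
\[
(p/r)^{Nr}(q/\bar r)^{N\bar r}\,\bigl((q/\bar r)(r/p)\bigr)^{\theta_N}=e^{-ND}\rho^{\theta_N}.
\]
The remaining factor is $\pi_N(-\theta_N;r)$ by definition, so the identity is exact. The cited local expansion \cite[Theorem~3.1]{elezovic_mad} gives $\pi_N(h;r)=(2\pi Nr\bar r)^{-1/2}(1+O(N^{-1}))$ uniformly for bounded $h$. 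Combining this with (i) and (ii) gives \eqref{eq:hyp-asymp}.
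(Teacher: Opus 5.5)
Your route is essentially the paper's: the telescoping ratio, the unconditional bound $R_j\le\rho^j$, a cut-off, termwise summation against $\rho^j$, and the exact factorisation in (iii). The paper obtains (ii) as the case $K=1$ of Theorem~\ref{thm:all-coeff}, with the cut $J=\lceil(\log N)^2\rceil$. Doing $K=1$ by hand with a cut at $N^{1/3}$ is fine. Your alternative cut at $\varepsilon N$ is not: there the neglected terms of $\log R_j$, of size $j^3/N^2$, are of order $N$, so the pointwise bound $R_j=\rho^j\bigl(1+\dots+O(j^4/N^2)\bigr)$ fails. Beyond the cut you should use only $R_j\le\rho^j$.

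There is, however, a concrete error at exactly the step that produces $U_1$. Since $m=Nr-\theta_N$, one has $N-m=N\bar r+\theta_N$. Hence $N-m+i+1=N\bar r\bigl(1+(1+\theta+i)/(N\bar r)\bigr)$, whereas your denominator has $1-\theta+i$. This is not cosmetic. With your factor, the $\theta$-part of the linear coefficient is $\theta(1/r-1/\bar r)$ rather than $\theta(1/r+1/\bar r)$, and the bracket becomes $\frac1{r\bar r}\bigl[\tfrac12j^2+c(1-2\theta)j\bigr]$. Summing would then give $\tfrac12M_3+\bigl(\tfrac12\theta+c(1-2\theta)\bigr)M_2+\theta c(1-2\theta)M_1$ instead of \eqref{eq:U1}; the two agree only at $\theta=0$. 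No identity for $\theta/r+(1-\theta)/\bar r$ can turn this into $\theta+c$.

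With the correct factor the bookkeeping is immediate:
\[
\frac1r\sum_{i<j}(\theta+i)+\frac1{\bar r}\sum_{i<j}(\theta+i+1)=\frac1{r\bar r}\Bigl[\tfrac12j^2+(\theta+c)j\Bigr].
\]
This uses $1/r+1/\bar r=1/(r\bar r)$ and $\tfrac12(1/\bar r-1/r)=c/(r\bar r)$. It agrees with $P_1$ in the paper. In (iii) you already use $N-m=N\bar r+\theta_N$ correctly, so making (ii) consistent with it repairs the argument.
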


	Part~(iii) is Bahadur--Rao~\cite{bahadur_rao} with the lattice adjustment
	$\rho^{\theta_N}$, which for the stop-loss functional is the correction of Huang
	and Oosterlee~\cite{huang_oosterlee}. The exponential tilt is visible in
	\eqref{eq:hyp-tilt}: tilting $\Bin(N,p)$ so as to move its mean to $Nr$ requires
	$e^{t_*}=\rho$, the rate $D(r\Vert p)$ is the Legendre transform, and
	$\rho^{\theta_N}$ is the price of $Nr$ not being a lattice point. It is now the
	\emph{centre}, and not the data law, that sets the rhythm of the oscillation.

\begin{proof}
	(i) is Theorem~\ref{thm:todhunter}, doubled. For (ii), the ratio telescopes,
	\[
		R_j=\prod_{i=1}^{j}\frac{(m-i+1)q}{(N-m+i)p},
	\]
	and since $m=Nr-\theta_N$ and $N-m=N\bar r+\theta_N$, each factor equals
	\begin{equation}\label{eq:tail-factor}
		\rho\cdot\frac{1-(\theta_N+i-1)/(Nr)}{1+(\theta_N+i)/(N\bar r)} .
	\end{equation}
	The numerator bracket is at most $1$ and the denominator bracket at least $1$,
	so every factor is at most $\rho$ and
	\begin{equation}\label{eq:tail-geom}
		R_j\le\rho^{\,j}\qquad\text{for all }0\le j\le m ,
	\end{equation}
	an unconditional bound that will do all the work of controlling the far tail.
	The rest of (ii) is the case $K=1$ of Theorem~\ref{thm:all-coeff}.

	(iii) The factorisation is exact algebra:
	\[
		b(m;N,p)=\bigl[\binom Nm r^m\bar r^{\,N-m}\bigr]
		\Bigl(\frac pr\Bigr)^{m}\Bigl(\frac q{\bar r}\Bigr)^{N-m},
	\]
	the bracket being $\pi_N(-\theta_N;r)$ because $m=Nr-\theta_N$; splitting the
	exponents at $m=Nr-\theta_N$ gives
	$(p/r)^{Nr}(q/\bar r)^{N\bar r}=e^{-ND(r\Vert p)}$ and
	$(p/r)^{-\theta_N}(q/\bar r)^{\theta_N}=\rho^{\theta_N}$. The local expansion
	\cite[Theorem~3.1]{elezovic_mad} gives
	$\pi_N(-\theta_N;r)=(2\pi Nr\bar r)^{-1/2}(1+O(N^{-1}))$, uniformly in
	$\theta_N$ and uniformly for $r$ in compact subsets of $(0,1)$.
\end{proof}

% ---------------------------------------------------------------------
\subsection{All the coefficients}
\label{ssec:all-coefficients}

	That binomial tail \emph{probabilities} in this regime admit a complete expansion in
	powers of $N^{-1}$, with coefficients given by a recurrence, is known: it is due to
	Timashev~\cite{timashev1999} by the saddle-point method, and is a special case of the
	general large-deviation expansions of Fill~\cite{fill1989} and, in the lattice setting,
	of the mod-$\phi$ framework of F\'eray, M\'eliot and Nikeghbali~\cite{fmn2016}, whose
	expansion already carries the prefactor $1/(1-\rho)$. Nor is the \emph{weighted} tail
	\eqref{eq:T-def} independent of the unweighted one: the reduction \eqref{eq:hyp-exact}
	writes it through binomial tail probabilities, so any complete expansion of those yields
	one for the weighted tail --- though there the two contributions exceed their difference
	by a factor $\asymp N(1-\rho)$, and the reduction therefore consumes one asymptotic order.
	What the next theorem adds is not the existence of the expansion but the \emph{closed
	form} of its coefficients, obtained directly for the weighted (stop-loss) tail; a
	term-by-term comparison with Timashev's coefficients is not immediate, his being defined
	by a recurrence in the complex domain. Let $B_k$
	denote the Bernoulli polynomials, let
	\begin{equation}\label{eq:Xi-def}
		\Xi_k(r):=r^{-k}+(-1)^{k+1}\bar r^{-k},
	\end{equation}
	and recall that $M_i=\sum_{j\ge0}j^{i}\rho^{\,j}=\rho\,A_i(\rho)/(1-\rho)^{i+1}$
	for $i\ge1$, with $A_i$ the Eulerian polynomial, and $M_0=(1-\rho)^{-1}$.

\begin{theorem}[The coefficients in closed form]\label{thm:all-coeff}
	Fix $0<r<p<1$, and define polynomials $P_k,Q_k$ in $j$, with coefficients
	depending on $\theta$ and $r$, by
	\begin{equation}\label{eq:Pk}
		P_k(j):=-\frac{\Xi_k(r)}{k(k+1)}\Bigl[B_{k+1}(\theta+j)-B_{k+1}(\theta)\Bigr]
		-\frac{(-1)^{k+1}}{k\,\bar r^{\,k}}\Bigl[(\theta+j)^k-\theta^k\Bigr],
		\qquad k\ge1,
	\end{equation}
	and
	\begin{equation}\label{eq:Qk}
		Q_0:=1,\qquad
		k\,Q_k=\sum_{i=1}^{k}i\,P_i\,Q_{k-i}\qquad(k\ge1).
	\end{equation}
	Then $\deg_j P_k\le k+1$, with equality unless $\Xi_k(r)=0$ --- that is, unless
	$r=\tfrac12$ and $k$ is even --- while $\deg_j Q_k=2k$ in all cases. Put
	\begin{equation}\label{eq:Sk}
		S_k(\theta):=\sum_{j\ge0}(\theta+j)\,Q_k(j)\,\rho^{\,j},
	\end{equation}
	a finite linear combination of $M_0,\dots,M_{2k+1}$; it is a polynomial in
	$\theta$ whose coefficients are rational in $\rho$ and in $r$. Then, for every
	fixed $K\ge0$, as $N\to\infty$,
	\begin{equation}\label{eq:T-expansion}
		T_N=\sum_{k=0}^{K}\frac{S_k(\theta_N)}{N^{k}}+O\bigl(N^{-(K+1)}\bigr),
	\end{equation}
	uniformly in $\theta_N\in[0,1)$ and uniformly for $(p,r)$ in compact subsets of
	$\{0<r<p<1\}$; the constant depends on $K$ and on that compact set, and
	deteriorates as $\rho\uparrow1$ (Corollary~\ref{cor:parameter}). The first two
	coefficients are
	\[
		S_0(\theta)=\theta M_0+M_1,
		\qquad
		S_1(\theta)=-\frac{U_1(\theta)}{r\bar r},
	\]
	with $U_1$ as in \eqref{eq:U1}; note that $U_1$ is the raw moment combination
	and $S_1$ the coefficient itself.
\end{theorem}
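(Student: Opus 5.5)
The proof starts from the exact product form of $R_j$ obtained in the proof of Proposition~\ref{prop:hyp-exp}. Each factor is \eqref{eq:tail-factor}, so
\[
	\log R_j=j\log\rho+\sum_{i=1}^{j}\Bigl[\log\Bigl(1-\frac{\theta+i-1}{Nr}\Bigr)-\log\Bigl(1+\frac{\theta+i}{N\bar r}\Bigr)\Bigr],\qquad \theta=\theta_N.
\]
For $j\le N^{1/3}$, say, we expand both logarithms in powers of $N^{-1}$ and sum over $i$ by Faulhaber's formula. We use $\sum_{i=1}^{j}(\theta+i-1)^k=[B_{k+1}(\theta+j)-B_{k+1}(\theta)]/(k+1)$, and write the second sum as $\sum_{i=1}^{j}(\theta+i)^k$, which is the same expression plus $(\theta+j)^k-\theta^k$. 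The coefficient of $N^{-k}$ then combines $-r^{-k}/k$ from the first logarithm and $(-1)^k\bar r^{-k}/k$ from the second. This gives exactly $-\Xi_k(r)/k$ multiplying the Bernoulli difference, together with the boundary term $-(-1)^{k+1}\bar r^{-k}[(\theta+j)^k-\theta^k]/k$. Hence
\[
	\log R_j=j\log\rho+\sum_{k\ge1}P_k(j)N^{-k},
\]
with \eqref{eq:Pk}. The degree claims follow at once. The Bernoulli difference has exact degree $k+1$ in $j$ with leading coefficient $1$, so $\deg_j P_k=k+1$ unless $\Xi_k(r)=0$. If $\Xi_k(r)=0$, the boundary term still has degree $k$.

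Next we exponentiate. Writing $\exp(\sum_k P_kx^k)=\sum_k Q_kx^k$ and differentiating in $x$ gives the Bell-type recursion \eqref{eq:Qk}. By induction $\deg Q_k\le 2k$. The top-degree part comes from $P_1^k/k!$, whose degree is $2k$ since $P_1$ has degree $2$ with leading coefficient $-\Xi_1(r)/2=-\tfrac12(r^{-1}+\bar r^{-1})\neq0$. The other products have degree at most $\sum_i(k_i+1)<2k$, because some $k_i\ge2$ forces fewer factors. So $\deg Q_k=2k$ exactly. Truncating, for $j\le N^{1/3}$ we want
\[
	R_j=\rho^j\Bigl[\sum_{k\le K}Q_k(j)N^{-k}+E_{K}(j)\Bigr],\qquad |E_K(j)|\le C(1+j)^{2K+2}N^{-K-1}.
\]
This is the main technical step. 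In this range $P_k(j)N^{-k}=O(j^{k+1}N^{-k})$, and this is $O(1)$ since $j^{k+1}/N^{k}\le N^{(k+1)/3-k}\to0$. The Taylor remainder of $\exp$ and of each logarithm is therefore controlled by the first omitted term. That term is a polynomial of degree at most $2K+2$ in $j$, times $N^{-K-1}$. I would make this rigorous with a Cauchy estimate: treat $\log R_j-j\log\rho$ as an analytic function of $x=1/N$ on a disc of radius $\asymp j^{-1}$ and bound its coefficients. The constants are uniform in $\theta\in[0,1)$ and in $(p,r)$ compact, because the only singularities sit at $|x|\gtrsim 1/(j+1)$ with constants depending on $r,\bar r$.

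Finally we sum. For $j\le N^{1/3}$, the main terms give $\sum_{k\le K}N^{-k}\sum_{j\le N^{1/3}}(\theta+j)Q_k(j)\rho^j$. Completing each inner sum to $j\ge0$ yields $S_k(\theta)$, which is a combination of $M_0,\dots,M_{2k+1}$ since $(\theta+j)Q_k(j)$ has degree $2k+1$. The completion error is $O(N^{c}\rho^{N^{1/3}})$, which is smaller than any power of $N$. The error terms contribute $N^{-K-1}\sum_j(1+j)^{2K+3}\rho^j=O(N^{-K-1})$. For the far range $N^{1/3}<j\le m$, the unconditional bound \eqref{eq:tail-geom} gives $\sum(\theta+j)\rho^j=O(\rho^{N^{1/3}}N^{1/3})$, also smaller than any power of $N$. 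This proves \eqref{eq:T-expansion}, with constants deteriorating through $\sum j^{n}\rho^j\sim(1-\rho)^{-n-1}$ as $\rho\uparrow1$.

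The explicit $S_1$ comes from $Q_1=P_1$. With $B_2(\theta+j)-B_2(\theta)=j^2+(2\theta-1)j$ and $\Xi_1=1/(r\bar r)$, we get
\[
	P_1=-\frac{j^2+(2\theta-1)j}{2r\bar r}-\frac{j}{\bar r}=-\frac{1}{r\bar r}\Bigl[\tfrac12j^2+(\theta+c)j\Bigr],
\]
using $-\tfrac12+r=c$. Multiplying by $(\theta+j)$ and expanding in powers of $j$ gives $\theta(\theta+c)j+(\tfrac32\theta+c)j^2+\tfrac12j^3$, which is \eqref{eq:U1} after summing against $\rho^j$.
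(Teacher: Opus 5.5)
Your proposal is correct and follows the paper's proof step for step: the exact Faulhaber/Bernoulli expansion of $\log R_j$ producing $\Xi_k(r)$ and \eqref{eq:Pk}, the exponential (Bell) recursion with $\deg_j Q_k=2k$ read off from $P_1^{\,k}/k!$, and a cut-off whose far range is controlled by $R_j\le\rho^{\,j}$. The only difference is bookkeeping: you cut at $N^{1/3}$ rather than $(\log N)^2$ and carry a remainder that is polynomial in $j$, $C(1+j)^{2K+2}N^{-K-1}$, which sums against $(\theta+j)\rho^{\,j}$ to $O(N^{-(K+1)})$ directly, so you avoid the paper's logarithmic losses and its final bootstrap from $K$ to $K+1$.
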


\begin{proof}
	\emph{Step 1: an exact logarithm.} By \eqref{eq:tail-factor},
	\[
		\log R_j=j\log\rho
		-\sum_{k\ge1}\frac1k
		\Bigl[(Nr)^{-k}\sum_{i=1}^{j}(\theta+i-1)^k
		+(-1)^{k+1}(N\bar r)^{-k}\sum_{i=1}^{j}(\theta+i)^k\Bigr].
	\]
	The inner sums are power sums, and Bernoulli's formula
	$\sum_{u=0}^{n-1}(x+u)^k=[B_{k+1}(x+n)-B_{k+1}(x)]/(k+1)$ gives
	\begin{gather*}
		\sum_{i=1}^{j}(\theta+i-1)^k
		=\frac{B_{k+1}(\theta+j)-B_{k+1}(\theta)}{k+1},\\
		\sum_{i=1}^{j}(\theta+i)^k
		=\frac{B_{k+1}(\theta+j+1)-B_{k+1}(\theta+1)}{k+1}.
	\end{gather*}
	Substituting these, and eliminating the shifted Bernoulli polynomial by
	$B_{k+1}(x+1)=B_{k+1}(x)+(k+1)x^k$, collects the two contributions into the one
	combination $\Xi_k(r)$ and produces exactly \eqref{eq:Pk}:
	\begin{equation}\label{eq:logR-P}
		\log R_j=j\log\rho+\sum_{k\ge1}N^{-k}P_k(j).
	\end{equation}
	This is an identity; no approximation has been made.

	\emph{Step 2: the range that matters.} Put $J:=\lceil(\log N)^2\rceil$. By
	\eqref{eq:tail-geom},
	\[
		\sum_{j>J}(\theta_N+j)R_j\le\sum_{j>J}(1+j)\rho^{\,j}
		=O\bigl(J\rho^{J}\bigr)=O\bigl(e^{-c_0(\log N)^2}\bigr)
	\]
	for some $c_0>0$, which is smaller than every power of $N^{-1}$; the same bound,
	with $Q_k$ in place of $1$, removes the tail of \eqref{eq:Sk}. It therefore
	suffices to work with $j\le J$ and to restore the sums to $j=\infty$ at the end.
	The cut at $J$ is necessary: at $j\asymp\sqrt N$ one has
	$N^{-1}P_1(j)=\Theta(1)$, and the exponential could not be expanded term by term
	there.

	\emph{Step 3: exponentiation.} For $j\le J$ the terms of \eqref{eq:logR-P}
	beyond $k=K+1$ are $O\bigl((\log N)^{2K+6}N^{-(K+2)}\bigr)$, uniformly in
	$\theta$, and $N^{-k}P_k(j)=O\bigl((\log N)^{2k+2}N^{-k}\bigr)=o(1)$; so the
	exponential may be expanded. The exponential formula
	\[
		\exp\Bigl\{\sum_{k\ge1}x^kP_k\Bigr\}=\sum_{k\ge0}x^kQ_k
	\]
	--- equivalently the recursion \eqref{eq:Qk}, obtained by differentiating that identity in $x$ --- gives
	\[
		R_j=\rho^{\,j}\Bigl[\sum_{k=0}^{K}N^{-k}Q_k(j)
		+O\bigl((\log N)^{4K+8}N^{-(K+1)}\bigr)\Bigr],
		\qquad j\le J,
	\]
	the power of $\log N$ being governed by $\deg_j Q_{K+1}=2K+2$. Multiplying by
	$(\theta_N+j)$, summing over $j\le J$ and restoring the tails by Step~2 gives
	\eqref{eq:T-expansion} with remainder $O((\log N)^{4K+8}N^{-(K+1)})$. Applying
	this with $K+1$ in place of $K$, and absorbing the last displayed term, gives
	the clean $O(N^{-(K+1)})$: the logarithmic factors are harmless precisely
	because $K$ is arbitrary.

	\emph{Step 4: the degrees.} That $\deg_jP_k\le k+1$ is clear from
	\eqref{eq:Pk}, with equality unless the coefficient $\Xi_k(r)$ of the leading
	Bernoulli term vanishes; and $\Xi_k(r)=0$ forces $r^{-k}=(-1)^{k}\bar r^{-k}$,
	hence $k$ even and $r=\bar r=\tfrac12$. Since $\Xi_1(r)=1/r+1/\bar r\ne0$
	always, $\deg_jP_1=2$; the dominant contribution to \eqref{eq:Qk} is
	$P_1^{\,k}/k!$, so $\deg_jQ_k=2k$ in all cases. Hence $(\theta+j)Q_k(j)$ has
	degree $2k+1$ and \eqref{eq:Sk} is a combination of $M_0,\dots,M_{2k+1}$.
	Finally $Q_1=P_1=-\frac1{r\bar r}\bigl[\tfrac12j^2+(\theta+c)j\bigr]$, whose
	pairing with $(\theta+j)\rho^j$ is $-U_1/(r\bar r)$.
\end{proof}

\begin{remark}[The three ingredients]\label{rem:ingredients}
	Each step is classical in isolation, and that is the point. The Bernoulli
	polynomials arise as the power sums of Step~1, which is why the combination
	$\Xi_k(r)$ of \eqref{eq:Xi-def} appears --- the same combination that governs
	the local expansion of the binomial mass~\cite{elezovic_mad}. The recursion
	\eqref{eq:Qk} is the exponential (Bell) formula. The moments $M_i$ are Eulerian
	polynomials over $(1-\rho)^{i+1}$. The contribution is the combined closed-form
	recursion for the coefficients.

	At $r=\tfrac12$ --- the walk of \S\ref{sec:origin} --- one has
	$\Xi_k(\tfrac12)=2^k\bigl(1+(-1)^{k+1}\bigr)$, so that
	\begin{equation}\label{eq:Xi-half}
		\Xi_{2i}(\tfrac12)=0,\qquad \Xi_{2i+1}(\tfrac12)=2^{2i+2}:
	\end{equation}
	half of the Bernoulli contributions vanish identically. Thus the additional
	symmetry of a half-integer fold is reflected directly in the coefficients.
\end{remark}

% ---------------------------------------------------------------------
\subsection{The denominators, and the expansion parameter}

\begin{corollary}[Exact denominators]\label{cor:parameter}
	For each $k\ge0$ the coefficient $S_k(\theta)$ has denominator exactly
	$(1-\rho)^{2k+2}$; more precisely, uniformly in $\theta\in[0,1)$,
	\begin{equation}\label{eq:Sk-limit}
		\lim_{\rho\uparrow1}(1-\rho)^{2k+2}\,S_k(\theta)
		=\frac{(-1)^k\,(2k+1)!!}{(r\bar r)^{k}}\ \ne\ 0 .
	\end{equation}
	Consequently the ratio of the $k$-th term of \eqref{eq:T-expansion} to the
	leading one is $\Theta(\lambda^{k})$, where
	\begin{equation}\label{eq:lambda}
		\lambda:=\frac{1}{N(1-\rho)^{2}} ,
	\end{equation}
	and \eqref{eq:T-expansion} is informative precisely while $\lambda\to0$.
\end{corollary}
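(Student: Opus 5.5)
Write $S_k(\theta)=\sum_{j\ge0}(\theta+j)Q_k(j)\rho^{j}$ as a linear combination of the moments $M_i$. The point is that as $\rho\uparrow1$ only the top-degree part of $(\theta+j)Q_k(j)$ matters. The steps below are in the order I would carry them out.

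\emph{Step 1: order of the moments.} From $M_i=\rho A_i(\rho)/(1-\rho)^{i+1}$ and $A_i(1)=i!$ we get $(1-\rho)^{i+1}M_i\to i!$. Since $\deg_j\bigl[(\theta+j)Q_k(j)\bigr]=2k+1$ (Theorem~\ref{thm:all-coeff}), every $M_i$ that occurs has $i\le 2k+1$. Hence $(1-\rho)^{2k+2}S_k$ has a finite limit, equal to $(2k+1)!$ times the leading coefficient of $(\theta+j)Q_k(j)$, i.e.\ the leading coefficient of $Q_k$. The lower $M_i$ contribute $O(1-\rho)$, uniformly in $\theta\in[0,1)$, because the coefficients of $Q_k$ are polynomial in $\theta$.

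The coefficients of $Q_k$ also depend on $r$, and $\rho\uparrow1$ means $r\uparrow p$. I read the limit with $r$ held fixed and $\rho$ treated as an independent variable. This is legitimate because, by Theorem~\ref{thm:all-coeff}, $S_k$ is a rational function of $\rho$ with coefficients depending on $(\theta,r)$.

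\emph{Step 2: leading coefficient of $Q_k$.} The only polynomial of degree $k+1$ in $j$ is $P_k$ (equality unless $\Xi_k=0$). So in the exponential formula $\exp\{\sum x^kP_k\}=\sum x^kQ_k$, the degree-$2k$ part of $Q_k$ comes only from $P_1^k/k!$. Indeed, a product $P_{i_1}\cdots P_{i_s}$ with $\sum i_t=k$ has degree at most $\sum(i_t+1)=k+s$, which equals $2k$ only when all $i_t=1$. By Step~4 of the proof of Theorem~\ref{thm:all-coeff}, $P_1$ has leading term $-j^2/(2r\bar r)$. Therefore the leading coefficient of $Q_k$ is $(-1)^k/\bigl(2^k k!\,(r\bar r)^k\bigr)$.

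\emph{Step 3: the constant.} Combining Steps 1 and 2,
\[
(1-\rho)^{2k+2}S_k\to\frac{(-1)^k(2k+1)!}{2^kk!(r\bar r)^k}=\frac{(-1)^k(2k+1)!!}{(r\bar r)^k},
\]
using $(2k+1)!=2^kk!\,(2k+1)!!$. This limit is nonzero, which proves the denominator is exactly $(1-\rho)^{2k+2}$.

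\emph{Step 4: consequence for the expansion.} The $k$-th term of \eqref{eq:T-expansion} is $S_k/N^k\asymp (1-\rho)^{-2k-2}N^{-k}$ and the leading term is $S_0\asymp(1-\rho)^{-2}$. Their ratio is therefore $\Theta\bigl((N(1-\rho)^2)^{-k}\bigr)=\Theta(\lambda^k)$, uniformly in $\theta$.

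The main obstacle is only bookkeeping: making sure no lower-order product in the Bell recursion reaches degree $2k$. This is settled by the degree count in Step 2, which holds even when some $\Xi_k$ vanish, since vanishing only lowers degrees.
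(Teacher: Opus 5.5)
Your proposal is correct and follows essentially the same route as the paper: $M_i\sim i!\,(1-\rho)^{-(i+1)}$, so only the top-degree term of $(\theta+j)Q_k(j)$ survives, its coefficient is that of $P_1^{\,k}/k!$, and $(2k+1)!/(2^kk!)=(2k+1)!!$. You also spell out two points the paper leaves implicit: the degree count $\sum(i_t+1)=k+s\le 2k$ showing that only $P_1^{\,k}$ reaches degree $2k$, and the fact that the limit is taken with $r$ fixed, which is legitimate because the $Q_k$ depend only on $(\theta,r)$.
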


\begin{proof}
	As $\rho\uparrow1$ one has $M_i\sim i!\,(1-\rho)^{-(i+1)}$, so in \eqref{eq:Sk}
	the highest power of $j$ dominates. By Step~4 of the proof of
	Theorem~\ref{thm:all-coeff} the leading coefficient of $Q_k$ is that of
	$P_1^{\,k}/k!$, namely $(-1/(2r\bar r))^{k}/k!$, so $(\theta+j)Q_k(j)$ has
	leading term $j^{2k+1}(-1/(2r\bar r))^{k}/k!$ and
	\[
		S_k(\theta)\sim\frac{1}{k!}\Bigl(\frac{-1}{2r\bar r}\Bigr)^{k}M_{2k+1}
		\sim\frac{(2k+1)!}{k!\,(-2r\bar r)^{k}}\cdot\frac1{(1-\rho)^{2k+2}} ,
	\]
	and $(2k+1)!/(k!\,2^k)=(2k+1)!!$. The limit is independent of $\theta$ and
	nonzero, which is the assertion. Dividing by $S_0\sim(1-\rho)^{-2}$ gives the
	ratio $\Theta((1-\rho)^{-2k})$, and multiplying by $N^{-k}$ gives
	$\Theta(\lambda^k)$.
\end{proof}

\begin{remark}[The parameter is the familiar one]\label{rem:parameter}
	Corollary~\ref{cor:parameter} is the lattice, weighted counterpart of a
	classical fact. In the asymptotic expansion of the
	incomplete gamma function at $z=\lambda a$ with $0<\lambda<1$,
	\[
		\gamma(a,z)\sim-z^ae^{-z}\sum_{k\ge0}
		\frac{(-a)^k\,b_k(\lambda)}{(z-a)^{2k+1}}
		\qquad\cite[\text{Eq.\ 8.11.6}]{dlmf},
	\]
	one has $z-a=-a(1-\lambda)$, so that the $k$-th term carries
	$\{a(1-\lambda)^2\}^{-k}$ relative to the leading one: the same parameter, with
	$a\leftrightarrow N$ and $\lambda\leftrightarrow\rho$. The exponent there is
	$2k+1$ and here it is $2k+2$; the one extra power is exactly what the linear
	weight of the stop-loss functional contributes. The double factorials have the corresponding shift:
	well: DLMF's coefficients satisfy $b_k(1)=(2k-1)!!$, against $(2k+1)!!$ in
	\eqref{eq:Sk-limit}. This comparison is included only to identify the expansion
	parameter; the new point here is the closed form of the coefficients themselves.
\end{remark}

\begin{remark}[The series diverges]\label{rem:divergent}
	Corollary~\ref{cor:parameter} concerns $\rho\uparrow1$ at fixed $k$. At fixed
	$\rho<1$ and $k\to\infty$ the coefficients behave differently, and it is worth
	saying that they grow. Since $M_i\sim i!\,(\log(1/\rho))^{-(i+1)}$ as
	$i\to\infty$, the same computation gives
	\[
		S_k\ \approx\ \frac{(2k+1)!}{k!\,(-2r\bar r)^{k}\,(\log(1/\rho))^{2k+2}} ,
	\]
	which grows like $4^k\,k!$ times a constant to the $k$. Thus
	\eqref{eq:T-expansion} is a \emph{divergent} asymptotic series, as such
	expansions usually are. Nothing here needs its convergence:
	\eqref{eq:T-expansion} is used only as a truncated expansion with a proved
	remainder, not as a convergent series.
\end{remark}

	The variance is immediate, since the second moment of a folded variable is the
	second moment of the variable itself.

\begin{proposition}[Variance about a prescribed centre]\label{prop:hyp-var}
	Exactly, for every $r$,
	$\Var(A)=4\bigl[Npq+N^2(p-r)^2\bigr]-(\E A)^2$; and for fixed $r<p$,
	\begin{equation}\label{eq:hyp-var-asymp}
		\Var(A)-4Npq
		=-16\,N(p-r)\,b(m;N,p)\,S_0(\theta_N)\,\bigl(1+O(N^{-1})\bigr).
	\end{equation}
	The correction to $4Npq$ is therefore negative; its modulus is of order
	$\sqrt N\,e^{-ND(r\Vert p)}$ and oscillates boundedly through $\theta_N$.
\end{proposition}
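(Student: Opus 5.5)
The exact identity is a second-moment computation. Since $A^2=4(X-Nr)^2$, folding does not change the second moment, and
\[
\E A^2=4\,\E(X-Nr)^2=4\bigl[\Var X+(\E X-Nr)^2\bigr]=4\bigl[Npq+N^2(p-r)^2\bigr].
\]
Subtracting $(\E A)^2$ gives the first assertion, valid for every $r$ with no asymptotics involved.

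For the asymptotic statement I use the exact form of Proposition~\ref{prop:hyp-exp}(i) together with definition \eqref{eq:T-def}. Write $\E A=2N(p-r)+4\,b(m;N,p)\,T_N$ and set $\varepsilon_N:=4\,b(m;N,p)\,T_N\ge 0$. Expanding the square,
\[
(\E A)^2=4N^2(p-r)^2+16N(p-r)\,b(m;N,p)\,T_N+\varepsilon_N^2,
\]
so that exactly
\[
\Var(A)-4Npq=-16N(p-r)\,b(m;N,p)\,T_N-\varepsilon_N^2 .
\]

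It remains to compare the two terms on the right. Theorem~\ref{thm:all-coeff} with $K=0$ gives $T_N=S_0(\theta_N)+O(N^{-1})$ uniformly in $\theta_N$. Moreover $S_0(\theta)=\theta M_0+M_1\ge M_1=\rho(1-\rho)^{-2}>0$ is bounded below uniformly in $\theta\in[0,1)$, so the additive error converts to $T_N=S_0(\theta_N)(1+O(N^{-1}))$. The square term is handled by
\[
\frac{\varepsilon_N^2}{16N(p-r)\,b\,T_N}=\frac{b(m;N,p)\,T_N}{N(p-r)} .
\]
By Proposition~\ref{prop:hyp-exp}(iii), $b(m;N,p)=O(N^{-1/2}e^{-ND(r\Vert p)})$, and $T_N=O(1)$. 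This ratio is therefore exponentially small, in particular $O(N^{-1})$, and it is absorbed into the factor $(1+O(N^{-1}))$. This proves \eqref{eq:hyp-var-asymp}.

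The qualitative remarks then follow from the same inputs. The sign of the correction is negative because $p>r$ and $b,S_0>0$. Its size is $N\cdot N^{-1/2}e^{-ND(r\Vert p)}=\sqrt N\,e^{-ND(r\Vert p)}$ by \eqref{eq:hyp-tilt}. The oscillation comes from the bounded factor $\rho^{\theta_N}S_0(\theta_N)$, which lies between positive constants for $\theta_N\in[0,1)$.

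There is no real obstacle; the only point needing care is the uniformity. The passage from an additive $O(N^{-1})$ to a relative one relies on the uniform positive lower bound for $S_0$. Uniformity over compact $(p,r)$-sets is inherited directly from Theorem~\ref{thm:all-coeff} and Proposition~\ref{prop:hyp-exp}.
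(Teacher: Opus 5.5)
Your proof is correct and follows the paper's argument: the exact second-moment identity, the exact decomposition $\Var(A)-4Npq=-16N(p-r)\,b(m;N,p)\,T_N-\varepsilon_N^2$, the leading-order expansion $T_N=S_0(\theta_N)+O(N^{-1})$, and the observation that the squared term is exponentially smaller. The differences are minor. The paper gets the sign from the $1$-Lipschitz property of $|\cdot|$, which gives $\Var|W|\le\Var W$ for every $r$, whereas you read it off the positive leading term. You also make explicit the uniform lower bound $S_0\ge M_1>0$ needed to turn the additive $O(N^{-1})$ into a relative one, which the paper leaves implicit.
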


\begin{proof}
	$\E(A^2)=4\,\E(X-Nr)^2=4[\Var X+N^2(p-r)^2]$ exactly. Writing
	$\E|X-Nr|=N(p-r)+\varepsilon$ with $\varepsilon=2\,\E[(Nr-X)^+]>0$,
	\[
		\Var|X-Nr|=Npq-2N(p-r)\,\varepsilon-\varepsilon^2 ,
	\]
	and $\varepsilon=2\,b(m;N,p)[S_0(\theta_N)+O(N^{-1})]$ by
	Proposition~\ref{prop:hyp-exp}, while $\varepsilon^2$ is exponentially smaller
	again. That the correction is negative needs no computation: $|\cdot|$ is
	$1$-Lipschitz, so $\Var|W|\le\Var W$ for every $W$, and folding can only remove
	variance.
\end{proof}

% =====================================================================
\section{The origin is a hypothesis: the biased walk}
\label{sec:origin}

	Let $X\sim\Bin(v,p)$ count the $+1$ steps of a $\pm1$ walk, let $S_v=2X-v$ be
	its position, and put
	\begin{equation}\label{eq:F-def}
		F(v,p):=\E|S_v|=2\,\E\bigl|X-\tfrac v2\bigr| .
	\end{equation}
	Replacing $X$ by $v-X$ shows $F(v,p)=F(v,q)$, so it suffices to assume
	$p>\tfrac12$. Theorem~\ref{thm:todhunter} at $c=v/2$ gives the exact value at
	once: with $m_0:=\lfloor v/2\rfloor$,
	\begin{equation}\label{eq:F-cdf}
		F(v,p)=v(p-q)\bigl[1-2B(m_0;v,p)\bigr]+4(m_0+1)\,q\,b(m_0+1;v,p) .
	\end{equation}
	One distribution function and one point mass, for every $v$ and $p$. The formula
	is Todhunter's, and no novelty is asserted for the identity itself. The relevant
	point here is its asymptotic regime. The centre $v/2$ is the mean only when $p=\tfrac12$; for a
	biased walk it is a \emph{prescribed} centre, namely $r=\tfrac12$ --- the
	hypothesis that the coin is fair --- and \S\ref{sec:hyp} therefore applies to it
	verbatim.

\begin{corollary}[Asymptotics of the biased walk]\label{cor:walk-asymp}
	Let $p\ne\tfrac12$ be fixed; by $F(v,p)=F(v,q)$ we may assume $p>\tfrac12$. Put
	\[
		\rho:=\frac qp\in(0,1),\qquad
		\theta_v:=\Bigl\{\frac v2\Bigr\}=
		\begin{cases}0,&v\ \text{even},\\[2pt] \tfrac12,&v\ \text{odd}.\end{cases}
	\]
	Then, as $v\to\infty$,
	\begin{equation}\label{eq:walk-asymp}
		F(v,p)=v(p-q)
		+4\,\bigl(2\sqrt{pq}\,\bigr)^{v}\sqrt{\frac{2}{\pi v}}\;\rho^{\,\theta_v}
		\Bigl[\,\frac{\theta_v}{1-\rho}+\frac{\rho}{(1-\rho)^2}\,\Bigr]
		\bigl(1+O(v^{-1})\bigr),
	\end{equation}
	and the correction continues in integer powers of $v^{-1}$, its coefficients
	given by Theorem~\ref{thm:all-coeff} at $r=\tfrac12$. The exponential rate is
	$-\log(2\sqrt{pq})=-\tfrac12\log(4pq)=D(\tfrac12\Vert p)$, the
	Kullback--Leibler divergence of the fair coin from $p$.
\end{corollary}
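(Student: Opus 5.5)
The plan is to specialise Proposition~\ref{prop:hyp-exp} to $N=v$ and $r=\tfrac12$, using the symmetry $F(v,p)=F(v,q)$ to place ourselves in the convention of \S\ref{sec:hyp}. There the relabelling $X\mapsto N-X$ turns the case $r>p$ into $r<p$. So with $p>\tfrac12=r$ we are directly in the setting $r<p$, and no relabelling is needed.

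First, compute the parameters. We have $\rho=rq/(\bar r p)=q/p\in(0,1)$ and $\theta_v=\{v/2\}\in\{0,\tfrac12\}$ according to parity. The rate is
\[
D(\tfrac12\Vert p)=\tfrac12\log\tfrac{1}{2p}+\tfrac12\log\tfrac1{2q}=-\log\bigl(2\sqrt{pq}\bigr),
\]
so $e^{-vD}=(2\sqrt{pq})^v$. Also $\sqrt{2\pi v\,r\bar r}=\sqrt{\pi v/2}$, which gives $4/\sqrt{\pi v/2}=4\sqrt{2/(\pi v)}$. Since $F(v,p)=\E A$ with $A=2|X-v/2|$, the deterministic term of \eqref{eq:hyp-asymp} is $2v(p-\tfrac12)=v(p-q)$. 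Substituting $S_0(\theta)=\theta M_0+M_1=\theta/(1-\rho)+\rho/(1-\rho)^2$ into \eqref{eq:hyp-asymp} then yields \eqref{eq:walk-asymp} verbatim.

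Second, address uniformity. The remainder $O(v^{-1})$ in \eqref{eq:hyp-asymp} is uniform in $\theta\in[0,1)$ and for $(p,r)$ in a compact set. Here $(p,\tfrac12)$ is a single fixed point with $\tfrac12<p<1$, so the constant is absolute for fixed $p$. Because $\theta_v$ takes only two values, we can simply appeal to the uniformity in $\theta$, even though $\theta_v$ does not converge along all $v$.

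Third, the continuation in integer powers of $v^{-1}$ follows by inserting the full expansion \eqref{eq:T-expansion} of Theorem~\ref{thm:all-coeff} at $r=\tfrac12$. This is combined with the exact factorisation \eqref{eq:hyp-tilt} and the complete local expansion of $\pi_v(-\theta_v;\tfrac12)$ from \cite[Theorem~3.1]{elezovic_mad}. The product of two expansions in $v^{-1}$ is again such an expansion. The only subtle point is that the local expansion's coefficients depend on $\theta_v$. This causes no difficulty, since they are polynomial in $\theta$ and $\theta_v$ ranges over a finite set.

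There is no real obstacle. The step most deserving of care is bookkeeping: the factor $4$ in $\E A=2N(p-r)+4\,\E[(Nr-X)^+]$, the correct normalisation $r\bar r=\tfrac14$, and checking that $\rho^{\theta_v}$ equals $1$ or $\sqrt{q/p}$ as intended.
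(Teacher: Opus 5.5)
Your proposal is correct and follows the paper's proof. You specialise Proposition~\ref{prop:hyp-exp} to $N=v$ and $r=\tfrac12<p$, compute $\rho=q/p$, $\theta_v$, $e^{-vD}=(2\sqrt{pq})^v$ and $r\bar r=\tfrac14$, and substitute $S_0$ into \eqref{eq:hyp-asymp}. Your remark that the higher-order continuation also needs the full local expansion of $\pi_v(-\theta_v;\tfrac12)$, multiplied by \eqref{eq:T-expansion}, and not Theorem~\ref{thm:all-coeff} alone, is a correct refinement of a point the paper leaves implicit.
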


\begin{proof}
	Apply Proposition~\ref{prop:hyp-exp} with $N=v$ and $r=\tfrac12$, legitimate
	because $r=\tfrac12<p$ is fixed. Then $\theta_N=\{v/2\}=\theta_v$; the tilt is
	$\rho=r q/(\bar r p)=q/p$; the rate is
	$D(\tfrac12\Vert p)=\tfrac12\log\tfrac1{2p}+\tfrac12\log\tfrac1{2q}
	=-\tfrac12\log(4pq)$, so that $e^{-vD}=(4pq)^{v/2}=(2\sqrt{pq})^{v}$; and
	$r\bar r=\tfrac14$, so $(2\pi v\,r\bar r)^{-1/2}=\sqrt{2/(\pi v)}$.
	Substituting into \eqref{eq:hyp-asymp} gives \eqref{eq:walk-asymp}.
\end{proof}

\begin{remark}[Three readings]\label{rem:walk-reading}
	\emph{The parity is the lattice defect.} The exact value of $F$ is usually
	written out in the two parities of $v$. This split is the lattice defect of the
	fold: $\theta_v=\{v/2\}$ takes exactly the two values $0$ and $\tfrac12$, so the parity of $v$ is the
	oscillation of \S\ref{sec:hyp}, specialised to a fold at a half-integer.

	\emph{The scale changes at $p=\tfrac12$.} For $p\ne\tfrac12$ the walk travels a
	distance $\asymp v$ and the correction is exponentially small. At $p=\tfrac12$
	the rate $D(\tfrac12\Vert p)$ vanishes, the fold returns to the mean, and the
	scale collapses from $v$ to $\sqrt v$: $F(v,\tfrac12)\sim\sqrt{2v/\pi}$, which
	is De~Moivre's answer to Cuming's problem of 1721,
	$\E|S_{2n}|=2n\binom{2n}{n}2^{-2n}$. The transition is \S\ref{sec:regimes}.

	\emph{The remainder degrades near $p=\tfrac12$.} By
	Corollary~\ref{cor:parameter} the $O(v^{-1})$ in \eqref{eq:walk-asymp} is a
	fixed-$p$ remainder whose constant carries $(1-\rho)^{-2}$: numerically it is
	about $-17$ at $p=0.7$ and about $-260$ at $p=0.55$. This is the behaviour
	predicted by Corollary~\ref{cor:parameter}.
\end{remark}

% ---------------------------------------------------------------------
\subsection{The hypergeometric form, and why its orientation matters}

	The truncated binomial sum in \eqref{eq:F-cdf} is a terminating Gauss
	${}_2F_1$, equivalently an incomplete beta function \cite[\S8.17]{dlmf}; this is
	the representation Katti~\cite{katti1960} used. Writing
	$\ell:=\lfloor v/2\rfloor$, the exact value takes the form
	\begin{equation}\label{eq:F-hyp}
		F(v,p) = v(q-p) + 2 v\, p^{\ell+1} q^\ell \binom{2\ell}{\ell}
		\left[1 - \frac{q-p}{q}\cdot\frac{\ell}{\ell+1}\cdot
		{}_2F_1\!\left(1, 1{-}\ell;\, 2{+}\ell;\, -\frac{p}{q}\right)\right],
	\end{equation}
	a routine consequence of the identification of a truncated binomial sum with a
	terminating ${}_2F_1$; the second upper parameter $1-\ell$ is a nonpositive
	integer, so the series is a polynomial in $-p/q$ and no question of convergence
	arises, not even at $-p/q=-1$.

	One can try to obtain the asymptotics of $F$ from \eqref{eq:F-hyp} directly:
	one asymptotic binomial coefficient multiplies a terminating series. This route
	depends on the orientation, which may be chosen because $F(v,p)=F(v,q)$.

\begin{remark}[The two orientations]\label{rem:two-routes}
	Take $p<\tfrac12$. Then $|-p/q|<1$, the series is dominated by its first terms,
	and with
	\[
	\begin{aligned}
		H&={}_2F_1\Bigl(1,1-\ell;\ell+2;-\tfrac pq\Bigr)
		=\sum_{j\ge0}\Bigl(\frac pq\Bigr)^{j}
		\prod_{i=1}^{j}\frac{\ell-i}{\ell+1+i},\\
		\prod_{i=1}^j\frac{\ell-i}{\ell+1+i}
		&=1-\frac{j(j+2)}{\ell}+O(\ell^{-2})
	\end{aligned}
	\]
	uniformly on the relevant range, one obtains
	\[
		H=\frac{q}{q-p}
		-\frac1\ell\sum_{j\ge0}j(j+2)\Bigl(\frac pq\Bigr)^{j}+O(\ell^{-2}),
	\]
	an expansion in integer powers of $\ell^{-1}$ whose coefficients are geometric
	moments in the ratio $p/q$. Multiplying by the Stirling expansion of
	$\binom{2\ell}{\ell}p^{\ell}q^{\ell}$ then reproduces the coefficient structure
	of \S\ref{sec:hyp}. This is equivalent to the tilt construction: the argument of the hypergeometric function is the exponential
	tilt, $-p/q=-\rho$, and its geometric moments are the $M_i$ of
	Theorem~\ref{thm:all-coeff}.

	Take $p>\tfrac12$ instead. The
	hypergeometric function is then exponentially \emph{large} while the binomial
	coefficient is exponentially \emph{small}, and their rates cancel exactly:
	\[
		\binom{2\ell}{\ell}p^{\ell}q^{\ell}\cdot
		{}_2F_1\Bigl(1,1-\ell;\ell+2;-\frac pq\Bigr)\ \longrightarrow\ \frac qp
		\qquad(\ell\to\infty),
	\]
	as one sees by matching the leading term of \eqref{eq:F-hyp} against
	$F\sim v(p-q)$. (Numerically, at $p=0.6$ the product takes the values
	$0.6706$, $0.6707$, $0.6688$, $0.6677$ at $\ell=80,160,320,640$, against
	$q/p=0.6\overline{6}$; the approach is slow, of order $\ell^{-1}$.) The whole
	$O(v)$ leading term of $F$ is thus produced by a cancellation of two
	exponentials, while the exponentially small correction is not obtained from the
	resulting $\ell^{-1}$ expansion.

	The hypergeometric form is therefore useful when its argument is a contraction.
	In that orientation it is the tilt expansion; in the opposite orientation it
	places the correction behind an exponential cancellation.
\end{remark}

% =====================================================================
\section{The regimes, and the limits of the expansion}
\label{sec:regimes}

	The expansion of \S\ref{sec:hyp} is a fixed-$r$ expansion. What happens as
	$r\to p$ --- as the prescribed centre approaches the mean --- is a
	moderate-deviation question, and the answer is classical. We record it because
	it delimits the range in which \S\ref{sec:hyp} is informative. Write
	\begin{equation}\label{eq:a-def}
		a:=\frac{\sqrt N\,(p-r)}{\sqrt{pq}},\qquad \sigma:=\sqrt{Npq},
	\end{equation}
	and let $G(a):=\E|Z+a|=\sqrt{2/\pi}\,e^{-a^2/2}+a\operatorname{erf}(a/\sqrt2)$ be
	the folded-normal mean profile~\cite{leone1961}, so that
	$G(a)-a=2\,\E[(Z+a)^-]$.

\begin{proposition}[Moderate deviations]\label{prop:chain}
	Let $0<p<1$ be fixed and let $\psi$ be any function with $\psi(N)=o(N^{1/6})$.
	Then, uniformly over all $r\in(0,p]$ with $0\le a\le\psi(N)$,
	\begin{equation}\label{eq:chain}
		\E A=2N(p-r)+2\sigma\bigl(G(a)-a\bigr)\bigl(1+o(1)\bigr).
	\end{equation}
\end{proposition}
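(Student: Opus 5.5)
By Proposition~\ref{prop:hyp-exp}(i), $\E A-2N(p-r)=4\,\E[(Nr-X)^+]$. Write $W:=(X-Np)/\sigma$ and $c_N:=(Nr-Np)/\sigma=-a$. Then
\[
\E[(Nr-X)^+]=\sigma\,\E[(-a-W)^+].
\]
On the Gaussian side, $2\,\E[(Z+a)^-]=2\,\E[(-a-Z)^+]=G(a)-a$. So \eqref{eq:chain} is equivalent to
\[
\E[(-a-W)^+]=\E[(-a-Z)^+]\,\bigl(1+o(1)\bigr)
\]
uniformly over $0\le a\le\psi(N)$. The proof therefore reduces to a \emph{relative} comparison of the stop-loss transforms of $W$ and $Z$ at level $-a$.

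The tool I would use is the integral representation
\[
\E[(-a-W)^+]=\int_a^\infty \Pr\{W\le -t\}\,dt ,
\]
and likewise for $Z$, with $\Pr\{Z\le -t\}=\Phi(-t)$. I then split into two ranges.

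\emph{Bounded range, $0\le a\le a_0$.} Both transforms are bounded below by a positive constant depending on $a_0$. Berry--Esseen gives $\sup_t|\Pr\{W\le -t\}-\Phi(-t)|=O(N^{-1/2})$. To control the integral out to $t=\infty$, I would combine this with a uniform exponential tail (Hoeffding, $\Pr\{W\le -t\}\le e^{-2pqt^2}$) and cut at $t\asymp\sqrt{\log N}$. The absolute error is then $o(1)$, hence the relative error is $o(1)$.

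\emph{Growing range, $a\to\infty$ with $a\le\psi(N)=o(N^{1/6})$.} Here I would invoke Cram\'er's moderate-deviation theorem (Petrov, Ch.~VIII) in the lattice form:
\[
\frac{\Pr\{W\le -t\}}{\Phi(-t)}=\exp\bigl\{O(t^3/\sqrt N)\bigr\}\bigl(1+O((1+t)/\sqrt N)\bigr)
\]
uniformly for $0\le t=o(\sqrt N)$. For $t\le 2a$ (say), $t^3/\sqrt N=o(1)$ uniformly because $a=o(N^{1/6})$, so the integrand ratio is $1+o(1)$. The part $t>2a$ is negligible relative to $\int_a^\infty\Phi(-t)\,dt\asymp a^{-2}e^{-a^2/2}$. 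To show this I would use the Chernoff bound $\Pr\{W\le -t\}\le e^{-N D}$, where $D$ is the rate at the corresponding level; for $t\ge 2a$ this gives something like $e^{-ct^2}$ with $c$ close to $1/2$ while $t=o(N^{1/2})$, and a crude bound beyond that. Either way this tail is $\ll e^{-2a^2\cdot c}$, which is $o(a^{-2}e^{-a^2/2})$.

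The main obstacle is uniformity in the growing range, and specifically the lattice issue. $\Pr\{W\le -t\}$ is a step function, so the ratio to $\Phi(-t)$ near the jumps is not literally $1+o(1)$ once the lattice spacing $1/\sigma$ is comparable to the Gaussian tail scale $1/t$. Since $t\le\psi(N)=o(N^{1/6})\ll\sqrt N$, we have $1/\sigma=o(1/t)$, so the jumps are relatively small. Integration in $t$ also smooths them, and only the integrated ratio matters. I would first try to handle this by applying Cram\'er's theorem at the lattice points and bounding $\Pr$ between neighbouring values, absorbing the $e^{O(t/\sigma)}=1+o(1)$ discrepancy. Finally, the case where $\psi$ is bounded is covered entirely by the first range, and the two ranges together give \eqref{eq:chain} uniformly.
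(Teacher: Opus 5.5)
Your proof is correct, and its skeleton is the paper's: the same reduction to $\E[(-a-W)^+]=(1+o(1))\,\E[(-a-Z)^+]$, the same tail-integral representation, and Cram\'er's moderate-deviation theorem as the main input. Where you differ is the cut point of the $t$-integral.

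The paper chooses a single $T_N$ with $\psi(N)=o(T_N)$ and $T_N=o(N^{1/6})$, and applies Cram\'er's theorem on all of $[0,T_N]$. Since $T_N\gg a$ for every admissible $a$, this handles bounded and growing $a$ at once. Beyond $T_N$ the crude Hoeffding bound $e^{-2pqt^2}$ is enough, because $2pqT_N^2-\tfrac12a^2\to\infty$ whatever the value of $pq$.

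Your cut at $t=2a$ is what forces the two extra pieces of your argument:
\begin{itemize}
\item For bounded $a$ the region $t>2a$ is not negligible. Hence you need the separate Berry--Esseen treatment, plus the $\varepsilon$--$a_0$ bookkeeping that merges the two ranges into one uniform statement. You only sketch that bookkeeping, but it goes through.
\item For large $a$, Hoeffding beyond $2a$ gives only $e^{-8pqa^2}/(8pqa)$, which is $o(a^{-2}e^{-a^2/2})$ only when $pq>1/16$. Hence you need the Chernoff bound with constant close to $\tfrac12$ on $t=o(\sqrt N)$, and a separate crude bound for $t\gtrsim\sqrt N$.
\end{itemize}
What you gain is an explicit $O(\sqrt{\log N/N})$ relative error in the bounded range. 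What the paper's intermediate cut gains is brevity and uniformity in one stroke.

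The ``main obstacle'' you identify is not an obstacle. Cram\'er's theorem in Petrov is a statement about the distribution function at every real argument. It needs only Cram\'er's condition, not a non-lattice hypothesis, and there is no separate ``lattice form''. The jumps you worry about are already inside its error factor. Near level $-t$ a jump of $\Pr\{W\le -t\}$ has size $\asymp\varphi(t)/\sigma$, while $\Phi(-t)\asymp\varphi(t)/(1+t)$. The relative size is therefore $\asymp(1+t)/\sigma$, which is exactly the $O((1+t)/\sqrt N)$ term of the theorem. Your interpolation between neighbouring lattice points is thus redundant, though harmless; the paper makes this remark explicitly in its proof.
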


	The restriction to $r\le p$ (that is, $a\ge0$) is only for definiteness: since $G$ is
	even and the relabelling $X\mapsto N-X$, $(p,r)\mapsto(q,\bar r)$ of \S\ref{sec:hyp}
	preserves $A$ while sending $a\mapsto-a$, the statement covers $r>p$ as well.

	At $a=0$ this is $\E A=2\sigma G(0)(1+o(1))=\sqrt{2N/\pi}\,s\,(1+o(1))$, the
	leading term of \eqref{eq:mean-asymp}. For bounded $a$ it is the folded normal,
	$\E A=2\sigma G(a)(1+o(1))$. For $a\to\infty$ the Mills expansion
	$G(a)-a\sim2\varphi(a)/a^{2}$ turns \eqref{eq:chain} into
	$\E A-2N(p-r)\sim4\sigma\varphi(a)/a^{2}$, which is the correction predicted by
	\eqref{eq:hyp-asymp}. The three descriptions are obtained from the same
	moderate-deviation profile; the connection must be made through the variable
	$a$, not by substituting $r=p$ into \eqref{eq:hyp-asymp}, where $\rho\to1$ and
	the bracket diverges.

\begin{proof}
	Put $Z_N:=(X-Np)/\sigma$, so that $X-Nr=\sigma(Z_N+a)$ and, by $|x|=x+2x^-$,
	\[
		\E A=2\sigma\,\E|Z_N+a|=2N(p-r)+4\sigma\,\E\bigl[(Z_N+a)^-\bigr],
	\]
	using $2\sigma a=2N(p-r)$. Since $G(a)-a=2\,\E[(Z+a)^-]$, the assertion is that
	$\E[(Z_N+a)^-]=(1+o(1))\,\E[(Z+a)^-]$, and both sides are tail integrals,
	$\int_a^{\infty}\Pr\{Z_N\le-t\}\,dt$ and $\int_a^{\infty}\Phi(-t)\,dt$.

	Choose $T_N\to\infty$ with $\psi(N)=o(T_N)$ and $T_N=o(N^{1/6})$; this is
	possible because $\psi(N)=o(N^{1/6})$. Cram\'er's moderate-deviation theorem for
	sums of independent bounded variables
	(Petrov~\cite[Ch.~VIII, Thm.~2]{petrov1975}) is a statement about the
	distribution function and requires only Cram\'er's condition on the moment
	generating function, which a bounded summand satisfies; no non-lattice
	hypothesis enters, and the lattice character of $X$ is therefore harmless here.
	It gives
	\[
		\frac{\Pr\{Z_N\le-t\}}{\Phi(-t)}
		=\exp\Bigl\{-\frac{t^3}{\sqrt N}\,
		\lambda_{\mathrm{Cr}}\Bigl(\frac{-t}{\sqrt N}\Bigr)\Bigr\}
		\Bigl(1+O\Bigl(\frac{1+t}{\sqrt N}\Bigr)\Bigr)=1+o(1)
	\]
	uniformly for $0\le t\le T_N$, since $T_N^3/\sqrt N=o(1)$; here
	$\lambda_{\mathrm{Cr}}$ is Cram\'er's series, analytic near the origin. Hence
	\[
		\int_a^{T_N}\Pr\{Z_N\le-t\}\,dt
		=\bigl(1+o(1)\bigr)\int_a^{T_N}\Phi(-t)\,dt,
	\]
	uniformly over $0\le a\le\psi(N)$.

	For the far tail, Hoeffding's inequality gives
	$\Pr\{Z_N\le-t\}\le e^{-2pq\,t^2}$ for all $t\ge0$, so
	\[
		\int_{T_N}^{\infty}\Pr\{Z_N\le-t\}\,dt\le\frac{e^{-2pq\,T_N^2}}{4pq\,T_N},
		\qquad
		\int_{T_N}^{\infty}\Phi(-t)\,dt\le\frac{\varphi(T_N)}{T_N^{2}} .
	\]
	Since $a\le\psi(N)=o(T_N)$, both $2pq\,T_N^2-\tfrac12a^2$ and $T_N^2-a^2$ tend
	to $+\infty$; and $\int_a^{\infty}\Phi(-t)\,dt=\E[(Z+a)^-]$ is
	$\asymp\varphi(a)/a^{2}$ for large $a$ and is bounded below by a positive
	constant for bounded $a$. Both tails are therefore negligible against the main
	term in either regime, and adding the two ranges completes the proof.
\end{proof}

\begin{remark}[Why the leading coefficient alone is not enough]
	\label{rem:S0-not-enough}
	Proposition~\ref{prop:chain} settles a point that \S\ref{sec:hyp} cannot settle
	by itself. The exact correction is $\E A-2N(p-r)=4\,b(m;N,p)\,T_N$, where $T_N$
	is the \emph{whole} tail sum \eqref{eq:T-def}, whereas its leading coefficient
	is $S_0(\theta_N)$. The two are not interchangeable as $r\to p$. Comparing
	\eqref{eq:chain} with the evaluation of $4\,b(m;N,p)\,S_0(\theta_N)$ --- which,
	as $\rho\uparrow1$ with $a\to\infty$, is $4\sigma\varphi(a)a^{-2}(1+o(1))$, by
	\eqref{eq:hyp-tilt} and $1-\rho=a(Npq)^{-1/2}(1+o(1))$ --- gives
	\begin{equation}\label{eq:T-over-S0}
		\frac{T_N}{S_0(\theta_N)}\ \longrightarrow\
		\frac{G(a)-a}{2\varphi(a)/a^{2}}
		\qquad\text{whenever } a\to a^*\in(0,\infty].
	\end{equation}
	The right-hand side equals $1$ if and only if $a^*=\infty$; for finite $a^*$ it
	is strictly smaller ($0.344$, $0.629$, $0.776$ at $a^*=1,2,3$). Replacing $T_N$
	by $S_0$ is therefore justified only in the saddle regime, and the fixed-$r$
	remainder of Proposition~\ref{prop:hyp-exp} does not license it for bounded $a$.
	This is the same non-uniformity predicted by Corollary~\ref{cor:parameter}.
\end{remark}

\begin{remark}[The boundary $N^{1/6}$]\label{rem:warning}
	The exponent $\tfrac16$ in Proposition~\ref{prop:chain} is the classical
	Cram\'er boundary: it is where the cubic term $a^3/\sqrt N$ of Cram\'er's series
	ceases to be $o(1)$. It is sharp only when the leading coefficient of that
	series is nonzero, that is, when $p\ne\tfrac12$; at $p=\tfrac12$ the third
	cumulant vanishes, the cubic term is absent, and the range extends to
	$a=o(N^{1/4})$.

	Proposition~\ref{prop:chain} does not imply that $2\sigma G(a)$ remains accurate
	for a \emph{fixed} mismatch, where $a\asymp\sqrt N$. The folded normal is
	correct to leading order --- the deterministic term
	$2N(p-r)$ is right, and the discrepancy is exponentially small --- but it does
	not reproduce the exponentially small correction itself, whose ratio to the
	Gaussian prediction is unbounded. At $p=0.4$, $r=0.2$ the exact correction
	beyond $2N(p-r)$ is $2.8\cdot10^{-9}$ at $N=200$, $2.3\cdot10^{-17}$ at $N=400$
	and $2.1\cdot10^{-33}$ at $N=800$; the folded-normal expression overstates it by
	the factors $6.3$, $32.3$ and $852.7$, while \eqref{eq:hyp-asymp} is in error by
	$8.4\%$, $4.3\%$ and $2.2\%$, decaying like $N^{-1}$. The missing factor in the
	Gaussian approximation is the lattice large-deviation prefactor: in the far tail the
	tilt $\rho^{\theta_N}$ and the geometric weight $S_0(\theta_N)$ are $O(1)$
	multiplicative effects, not perturbations.

	A description that is \emph{uniform} across the whole transition, and therefore
	stronger than anything in this section, is available through Temme's uniform
	expansion of the incomplete beta function \cite[\S8.18]{dlmf},~\cite{temme2015};
	the price is that its coefficients are not elementary. The point of
	\S\ref{sec:hyp} is complementary: away from the transition, the coefficients are
	elementary and can be written down.
\end{remark}

% =====================================================================

\end{document}